\DeclareDocumentCommand{\mathdef}{mO{0}m}{%
  \expandafter\let\csname old\string#1\endcsname=#1
  \expandafter\newcommand\csname new\string#1\endcsname[#2]{#3}
  \DeclareRobustCommand#1{%
    \ifmmode
      \expandafter\let\expandafter\next\csname new\string#1\endcsname
    \else
      \expandafter\let\expandafter\next\csname old\string#1\endcsname
    \fi
    \next
  }%
}
\newcommand{\mf}[1]{\mathfrak{#1}} 
\newcommand{\mb}[1]{\mathbb{#1}} 
\newcommand{\mc}[1]{\mathcal{#1}} 
\newcommand{\ms}[1]{\mathscr{#1}} 
\mathdef{\set}[1]{{\left\{#1\right\}}} 
\mathdef{\a}{\alpha}
\mathdef{\b}{\beta}
\mathdef{\g}{\gamma}
\mathdef{\d}{\delta}
\mathdef{\e}{\epsilon}
\mathdef{\z}{\zeta}
\mathdef{\h}{\eta}
\mathdef{\th}{\theta}
\mathdef{\i}{\iota}
\mathdef{\k}{\kappa}
\mathdef{\l}{\lambda}
\mathdef{\m}{\mu}
\mathdef{\n}{\nu}
\mathdef{\x}{\xi}
\mathdef{\o}{\omicron}
\mathdef{\p}{\pi}
\mathdef{\r}{\rho}
\mathdef{\s}{{\sigma}}
\mathdef{\t}{\tau}
\mathdef{\y}{\upsilon}
\mathdef{\f}{\phi}
\mathdef{\vf}{\varphi} 
\mathdef{\c}{\chi}
\mathdef{\ps}{\psi}
\mathdef{\w}{\omega}
\mathdef{\G}{\Gamma}
\mathdef{\S}{\Sigma}
\mathdef{\A}{\mb{A}}
\mathdef{\B}{\mb{B}}
\mathdef{\C}{\mb{C}}
\mathdef{\D}{\mb{D}}
\mathdef{\E}{\mb{E}}
\mathdef{\F}{\mb{F}}
\mathdef{\H}{\mb{H}}
\mathdef{\I}{\mb{I}}
\mathdef{\J}{\mb{J}}
\mathdef{\K}{\mb{K}}
\mathdef{\L}{\mb{L}}
\mathdef{\M}{\mb{M}}
\mathdef{\N}{\mb{N}}
\mathdef{\O}{\mb{O}}
\mathdef{\P}{\mb{P}}
\mathdef{\Q}{\mb{Q}}
\mathdef{\R}{\mb{R}}
\mathdef{\T}{\mb{T}}
\mathdef{\U}{\mb{U}}
\mathdef{\V}{\mb{V}}
\mathdef{\W}{\mb{W}}
\mathdef{\X}{\mb{X}}
\mathdef{\Y}{\mb{Y}}
\mathdef{\Z}{\mb{Z}}
\mathdef{\cA}{\mc{A}}
\mathdef{\cB}{\mc{B}}
\mathdef{\cC}{\mc{C}}
\mathdef{\cD}{\mc{D}}
\mathdef{\cE}{\mc{E}}
\mathdef{\cF}{\mc{F}}
\mathdef{\cG}{\mc{G}}
\mathdef{\cH}{\mc{H}}
\mathdef{\cI}{\mc{I}}
\mathdef{\cJ}{\mc{J}}
\mathdef{\cK}{\mc{K}}
\mathdef{\cL}{\mc{L}}
\mathdef{\cM}{\mc{M}}
\mathdef{\cN}{\mc{N}}
\mathdef{\cO}{\mc{O}}
\mathdef{\cP}{\mc{P}}
\mathdef{\cQ}{\mc{Q}}
\mathdef{\cR}{\mc{R}}
\mathdef{\cS}{\mc{S}}
\mathdef{\cT}{\mc{T}}
\mathdef{\cU}{\mc{U}}
\mathdef{\cV}{\mc{V}}
\mathdef{\cW}{\mc{W}}
\mathdef{\cX}{\mc{X}}
\mathdef{\cY}{\mc{Y}}
\mathdef{\cZ}{\mc{Z}}
\mathdef{\sA}{\ms{A}}
\mathdef{\sB}{\ms{B}}
\mathdef{\sC}{\ms{C}}
\mathdef{\sD}{\ms{D}}
\mathdef{\sE}{\ms{E}}
\mathdef{\sF}{\ms{F}}
\mathdef{\sG}{\ms{G}}
\mathdef{\sH}{\ms{H}}
\mathdef{\sI}{\ms{I}}
\mathdef{\sJ}{\ms{J}}
\mathdef{\sK}{\ms{K}}
\mathdef{\sL}{\ms{L}}
\mathdef{\sM}{\ms{M}}
\mathdef{\sN}{\ms{N}}
\mathdef{\sO}{\ms{O}}
\mathdef{\sP}{\ms{P}}
\mathdef{\sQ}{\ms{Q}}
\mathdef{\sR}{\ms{R}}
\mathdef{\sS}{\ms{S}}
\mathdef{\sT}{\ms{T}}
\mathdef{\sU}{\ms{U}}
\mathdef{\sV}{\ms{V}}
\mathdef{\sW}{\ms{W}}
\mathdef{\sX}{\ms{X}}
\mathdef{\sY}{\ms{Y}}
\mathdef{\sZ}{\ms{Z}}
\mathdef{\sa}{\ms{a}}
\mathdef{\sb}{\ms{b}}
\mathdef{\sd}{\ms{d}}
\mathdef{\se}{\ms{e}}
\mathdef{\sf}{\ms{f}}
\mathdef{\sg}{\ms{g}}
\mathdef{\sh}{\ms{h}}
\mathdef{\si}{\ms{i}}
\mathdef{\sj}{\ms{j}}
\mathdef{\sk}{\ms{k}}
\mathdef{\sl}{\ms{l}}
\mathdef{\sm}{\ms{m}}
\mathdef{\sn}{\ms{n}}
\mathdef{\sp}{\ms{p}}
\mathdef{\sq}{\ms{q}}
\mathdef{\sr}{\ms{r}}
\mathdef{\ss}{\ms{s}}
\mathdef{\st}{\ms{t}}
\mathdef{\su}{\ms{u}}
\mathdef{\sv}{\ms{v}}
\mathdef{\sw}{\ms{w}}
\mathdef{\sx}{\ms{x}}
\mathdef{\sy}{\ms{y}}
\mathdef{\sz}{\ms{z}}
\mathdef{\fA}{\mf{A}}
\mathdef{\fB}{\mf{B}}
\mathdef{\fC}{\mf{C}}
\mathdef{\fD}{\mf{D}}
\mathdef{\fE}{\mf{E}}
\mathdef{\fF}{\mf{F}}
\mathdef{\fG}{\mf{G}}
\mathdef{\fH}{\mf{H}}
\mathdef{\fI}{\mf{I}}
\mathdef{\fJ}{\mf{J}}
\mathdef{\fK}{\mf{K}}
\mathdef{\fL}{\mf{L}}
\mathdef{\fM}{\mf{M}}
\mathdef{\fN}{\mf{N}}
\mathdef{\fO}{\mf{O}}
\mathdef{\fP}{\mf{P}}
\mathdef{\fQ}{\mf{Q}}
\mathdef{\fR}{\mf{R}}
\mathdef{\fS}{\mf{S}}
\mathdef{\fT}{\mf{T}}
\mathdef{\fU}{\mf{U}}
\mathdef{\fV}{\mf{V}}
\mathdef{\fW}{\mf{W}}
\mathdef{\fX}{\mf{X}}
\mathdef{\fY}{\mf{Y}}
\mathdef{\fZ}{\mf{Z}}
\mathdef{\fa}{\mf{a}}
\mathdef{\fb}{\mf{b}}
\mathdef{\fc}{\mf{c}}
\mathdef{\fd}{\mf{d}}
\mathdef{\fe}{\mf{e}}
\mathdef{\ff}{\mf{f}}
\mathdef{\fg}{\mf{g}}
\mathdef{\fh}{\mf{h}}
\mathdef{\ffi}{\mf{i}} 
\mathdef{\fj}{\mf{j}}
\mathdef{\fk}{\mf{k}}
\mathdef{\fl}{\mf{l}}
\mathdef{\fm}{\mf{m}}
\mathdef{\fn}{\mf{n}}
\mathdef{\fo}{\mf{o}}
\mathdef{\fp}{\mf{p}}
\mathdef{\fq}{\mf{q}}
\mathdef{\fs}{\mf{s}}
\mathdef{\ft}{\mf{t}}
\mathdef{\fu}{\mf{u}}
\mathdef{\fv}{\mf{v}}
\mathdef{\fw}{\mf{w}}
\mathdef{\fx}{\mf{x}}
\mathdef{\fy}{\mf{y}}
\mathdef{\fz}{\mf{z}}
\mathdef{\bA}{\bm{A}}
\mathdef{\bB}{\bm{B}}
\mathdef{\bC}{\bm{C}}
\mathdef{\bD}{\bm{D}}
\mathdef{\bE}{\bm{E}}
\mathdef{\bF}{\bm{F}}
\mathdef{\bG}{\bm{G}}
\mathdef{\bH}{\bm{H}}
\mathdef{\bI}{\bm{I}}
\mathdef{\bJ}{\bm{J}}
\mathdef{\bK}{\bm{K}}
\mathdef{\bL}{\bm{L}}
\mathdef{\bM}{\bm{M}}
\mathdef{\bN}{\bm{N}}
\mathdef{\bO}{\bm{O}}
\mathdef{\bP}{\bm{P}}
\mathdef{\bQ}{\bm{Q}}
\mathdef{\bR}{\bm{R}}
\mathdef{\bS}{\bm{S}}
\mathdef{\bT}{\bm{T}}
\mathdef{\bU}{\bm{U}}
\mathdef{\bV}{\bm{V}}
\mathdef{\bW}{\bm{W}}
\mathdef{\bX}{\bm{X}}
\mathdef{\bY}{\bm{Y}}
\mathdef{\bZ}{\bm{Z}}
\mathdef{\ba}{\bm{a}}
\mathdef{\bb}{\bm{b}}
\mathdef{\bc}{\bm{c}}
\mathdef{\bd}{\bm{d}}
\mathdef{\be}{\bm{e}}
\mathdef{\bf}{\bm{f}}
\mathdef{\bg}{\bm{g}}
\mathdef{\bh}{\bm{h}}
\mathdef{\bi}{\bm{i}}
\mathdef{\bj}{\bm{j}}
\mathdef{\bk}{\bm{k}}
\mathdef{\bl}{\bm{l}}
\mathdef{\bbm}{\bm{m}}
\mathdef{\bn}{\bm{n}}
\mathdef{\bo}{\bm{o}}
\mathdef{\bp}{\bm{p}}
\mathdef{\bq}{\bm{q}}
\mathdef{\br}{\bm{r}}
\mathdef{\bs}{\bm{s}}
\mathdef{\bt}{\bm{t}}
\mathdef{\bu}{\bm{u}}
\mathdef{\bv}{\bm{v}}
\mathdef{\bw}{\bm{w}}
\mathdef{\bx}{\bm{x}}
\mathdef{\by}{\bm{y}}
\mathdef{\bz}{\bm{z}}
\mathdef{\RP}[1]{\R P^{#1}}
\mathdef{\CP}[1]{\C P^{#1}}
\mathdef{\GL}[2]{{\text{GL}(#1, #2)}}
\mathdef{\SL}[2]{{\text{SL}(#1, #2)}}
\mathdef{\SO}[1]{{\text{SO}(#1)}}
\mathdef{\SU}[1]{{\text{SU}(#1)}}
\mathdef{\OO}[1]{{\text{O}(#1)}}
\mathdef{\UU}[1]{{\text{U}(#1)}}
\mathdef{\Sp}[1]{{\text{Sp}(#1)}}
\mathdef{\PGL}[2]{{\text{PGL}(#1, #2)}}
\mathdef{\PSL}[2]{{\text{PSL}(#1, #2)}}
\mathdef{\QF}{\mc{QF}}
\mathdef{\ML}{\mc{ML}}
\mathdef{\QT}{\mc{QT}}
\mathdef{\PT}{\mc{PT}}
\mathdef{\del}{\nabla} 
\mathdef{\transverse}{\pitchfork}
\mathdef{\compl}{\backslash} 
\mathdef{\es}{\emptyset} 
\mathdef{\Wedge}{\bigwedge}
\mathdef{\Oplus}{\bigoplus}
\mathdef{\Otimes}{\bigotimes}
\mathdef{\eval}{\bigg|} 
\mathdef{\union}{\cup}
\mathdef{\intersect}{\cap}
\mathdef{\Union}{\bigcup}
\mathdef{\Intersect}{\bigcap}
\mathdef{\disunion}{\sqcup}
\mathdef{\hequiv}{\simeq}
\mathdef{\compose}{\circ}
\mathdef{\lquot}{\setminus}
\mathdef{\rquot}{/}
\newcommand{\lrp}[1]{\left(#1\right)} 
\newcommand{\lrb}[1]{\left[#1\right]} 
\newcommand{\ra}{\rightarrow} 
\DeclareMathOperator{\injrad}{injrad} 
\DeclareMathOperator{\vol}{vol} 
\DeclareMathOperator{\WP}{WP}
\DeclareMathOperator{\hol}{hol}
\DeclareMathOperator{\hyp}{hyp}
\DeclareMathOperator{\vis}{vis}
  \def\C{{\mathbb{C}}}  \def\E{{\mathbb{E}}} \def\F{{\mathbb{F}}}  \def\H{{\mathbb{H}}}      \def\N{{\mathbb{N}}} \def\OO{{\mathbb{O}}} \def\P{{\mathbb{P}}} \def\Q{{\mathbb{Q}}} \def\R{{\mathbb{R}}}   \def\UU{{\mathbb{U}}}     \def\Z{{\mathbb{Z}}}
\def\bA{{\bar{A}}} \def\bB{{\bar{B}}} \def\bC{{\bar{C}}} \def\bD{{\bar{D}}} \def\bE{{\bar{E}}} \def\bF{{\bar{F}}} \def\bG{{\bar{G}}} \def\bH{{\bar{H}}} \def\bI{{\bar{I}}} \def\bJ{{\bar{J}}} \def\bK{{\bar{K}}} \def\bL{{\bar{L}}} \def\bM{{\bar{M}}} \def\bN{{\bar{N}}} \def\bO{{\bar{O}}} \def\bP{{\bar{P}}} \def\bQ{{\bar{Q}}} \def\bR{{\bar{R}}} \def\bS{{\bar{S}}} \def\bT{{\bar{T}}} \def\bU{{\bar{U}}} \def\bV{{\bar{V}}} \def\bW{{\bar{W}}} \def\bX{{\bar{X}}} \def\bY{{\bar{Y}}} \def\bZ{{\bar{Z}}}
\def\ba{{\bar{a}}} \def\bb{{\bar{b}}} \def\bc{{\bar{c}}} \def\bd{{\bar{d}}} \def\be{{\bar{e}}}  \def\bg{{\bar{g}}} \def\bh{{\bar{h}}} \def\bi{{\bar{i}}} \def\bj{{\bar{j}}} \def\bk{{\bar{k}}} \def\bl{{\bar{l}}} \def\bm{{\bar{m}}} \def\bn{{\bar{n}}} \def\bo{{\bar{o}}} \def\bp{{\bar{p}}} \def\bq{{\bar{q}}} \def\br{{\bar{r}}} \def\bs{{\bar{s}}} \def\bt{{\bar{t}}} \def\bu{{\bar{u}}} \def\bv{{\bar{v}}} \def\bw{{\bar{w}}} \def\bx{{\bar{x}}} \def\by{{\bar{y}}} \def\bz{{\bar{z}}}
\def\cA{{\mathcal{A}}} \def\cB{{\mathcal{B}}} \def\cC{{\mathcal{C}}} \def\cD{{\mathcal{D}}} \def\cE{{\mathcal{E}}} \def\cF{{\mathcal{F}}} \def\cG{{\mathcal{G}}} \def\cH{{\mathcal{H}}} \def\cI{{\mathcal{I}}} \def\cJ{{\mathcal{J}}} \def\cK{{\mathcal{K}}} \def\cL{{\mathcal{L}}} \def\cM{{\mathcal{M}}} \def\cN{{\mathcal{N}}} \def\cO{{\mathcal{O}}} \def\cP{{\mathcal{P}}} \def\cQ{{\mathcal{Q}}} \def\cR{{\mathcal{R}}} \def\cS{{\mathcal{S}}} \def\cT{{\mathcal{T}}} \def\cU{{\mathcal{U}}} \def\cV{{\mathcal{V}}} \def\cW{{\mathcal{W}}} \def\cX{{\mathcal{X}}} \def\cY{{\mathcal{Y}}} \def\cZ{{\mathcal{Z}}}
\def\fA{{\mathfrak{A}}} \def\fB{{\mathfrak{B}}} \def\fC{{\mathfrak{C}}} \def\fD{{\mathfrak{D}}} \def\fE{{\mathfrak{E}}} \def\fF{{\mathfrak{F}}} \def\fG{{\mathfrak{G}}} \def\fH{{\mathfrak{H}}} \def\fI{{\mathfrak{I}}} \def\fJ{{\mathfrak{J}}} \def\fK{{\mathfrak{K}}} \def\fL{{\mathfrak{L}}} \def\fM{{\mathfrak{M}}} \def\fN{{\mathfrak{N}}} \def\fO{{\mathfrak{O}}} \def\fP{{\mathfrak{P}}} \def\fQ{{\mathfrak{Q}}} \def\fR{{\mathfrak{R}}} \def\fS{{\mathfrak{S}}} \def\fT{{\mathfrak{T}}} \def\fU{{\mathfrak{U}}} \def\fV{{\mathfrak{V}}} \def\fW{{\mathfrak{W}}} \def\fX{{\mathfrak{X}}} \def\fY{{\mathfrak{Y}}} \def\fZ{{\mathfrak{Z}}}
\def\fa{{\mathfrak{a}}} \def\fb{{\mathfrak{b}}} \def\fc{{\mathfrak{c}}} \def\fd{{\mathfrak{d}}} \def\fe{{\mathfrak{e}}} \def\ff{{\mathfrak{f}}} \def\fg{{\mathfrak{g}}} \def\fh{{\mathfrak{h}}}  \def\fj{{\mathfrak{j}}} \def\fk{{\mathfrak{k}}} \def\fl{{\mathfrak{l}}} \def\fm{{\mathfrak{m}}} \def\fn{{\mathfrak{n}}} \def\fo{{\mathfrak{o}}} \def\fp{{\mathfrak{p}}} \def\fq{{\mathfrak{q}}}  \def\fs{{\mathfrak{s}}} \def\ft{{\mathfrak{t}}} \def\fu{{\mathfrak{u}}} \def\fv{{\mathfrak{v}}} \def\fw{{\mathfrak{w}}} \def\fx{{\mathfrak{x}}} \def\fy{{\mathfrak{y}}} \def\fz{{\mathfrak{z}}}
\def\sA{{\mathscr{A}}} \def\sB{{\mathscr{B}}} \def\sC{{\mathscr{C}}} \def\sD{{\mathscr{D}}} \def\sE{{\mathscr{E}}} \def\sF{{\mathscr{F}}} \def\sG{{\mathscr{G}}} \def\sH{{\mathscr{H}}} \def\sI{{\mathscr{I}}} \def\sJ{{\mathscr{J}}} \def\sK{{\mathscr{K}}} \def\sL{{\mathscr{L}}} \def\sM{{\mathscr{M}}} \def\sN{{\mathscr{N}}} \def\sO{{\mathscr{O}}} \def\sP{{\mathscr{P}}} \def\sQ{{\mathscr{Q}}} \def\sR{{\mathscr{R}}} \def\sS{{\mathscr{S}}} \def\sT{{\mathscr{T}}} \def\sU{{\mathscr{U}}} \def\sV{{\mathscr{V}}} \def\sW{{\mathscr{W}}} \def\sX{{\mathscr{X}}} \def\sY{{\mathscr{Y}}} \def\sZ{{\mathscr{Z}}}
       \def\th{{\tilde{h}}}                  
     \def\vf{{\vec{f}}}                    
\renewcommand\a{\alpha}
\renewcommand\b{\beta}
\renewcommand\d{\delta}
\renewcommand\k{\kappa}
\renewcommand\l{\lambda}
\renewcommand\t{\tau}
\newcommand\ray{{\mathsf{ray}}}
\newcommand\Sector{\mathsf{Sector}}
  \newcommand{\dee}{\mathrm{d}}
\declaretheorem[numberwithin=section, style=plain]{theorem}
\declaretheorem{conjecture, lemma, corollary, proposition, convention}[sibling=theorem, style=plain]
[name=Definition, style=definition, sibling=theorem]
\declaretheorem{remark, example, question, notation}[sibling=definition, style=definition]
\title{Benjamini-Schramm limits of high genus translation surfaces: research announcement}
\author{Lewis Bowen\footnote{supported in part by NSF grant DMS-2154680}, Kasra Rafi\footnote{supported in part by NSERC Discovery grant, RGPIN-05507}, and Hunter Vallejos\footnote{supported in part by NSF grant DMS-1937215}}
\begin{document}

    \maketitle

\begin{abstract}

We prove that the sequence of Masur-Smillie-Veech (MSV) distributed random translation surfaces, with area equal to genus, Benjamini-Schramm converges as genus tends to infinity. This means that for any fixed radius $r>0$, if $X_g$ is an MSV-distributed random translation surface with area $g$ and genus $g$, and $o$ is a uniformly random point in $X_g$, then the radius-$r$ neighborhood of $o$ in $X_g$, as a pointed measured metric space, converges in distribution to the radius $r$ neighborhood of the root in a Poisson translation plane, which is a random pointed surface we introduce here. Along the way, we obtain bounds on statistical local geometric properties of translation surfaces, such as the probability that the random point $o$ has injectivity radius at most $r$, which may be of independent interest.
\end{abstract}

\section{Introduction}
Benjamini-Schramm convergence is a notion of convergence for sequences of (random) finite graphs, finite-volume manifolds, and, more generally, measured metric spaces \cite{benjamini_recurrence_2001,abert_unimodular_2022,MR4092421}. A sequence $(X_i)_i$ of random metric spaces, each equipped with a finite measure, is said to Benjamini-Schramm converge to a random pointed metric space $(X_\infty, o_\infty)$ if, when $o_i$ is a random point in $X_i$, the law of $(X_i, o_i)$ converges to the law of $(X_\infty, o_\infty)$ in the space of Borel probability measures on the space of pointed measured metric spaces. Here, we use the weak topology on the space of measures and the pointed Gromov-Hausdorff-Prokhorov topology on the space of pointed translation surfaces.  Intuitively, Benjamini-Schramm convergence describes the local geometry and structure experienced by a ``typical'' point on $X_i$ as $i \to \infty$.

This notion of convergence naturally generalizes to manifolds endowed with additional structures, such as translation surfaces. A translation surface consists of a compact connected surface $X$, a finite set $\Sigma \subset X$, and a translation structure on $X - \Sigma$. This translation structure is a maximal atlas of charts to $\C$ on $X - \Sigma$ such that the transition maps are translations. It induces a Riemann surface structure on $X -\Sigma$, which extends naturally to the entire surface $X$. Moreover, the differential $\dee z$, being invariant under translations, does not depend on the choice of chart. These local differentials glue together to define a global holomorphic $1$-form $\omega$ on $X$. There is also a locally Euclidean metric on $X-\Sigma$ which defines a notion of Lebesgue measure of $X$. In the complex-analytic language, the measure is given by the area form $\frac i 2 (\w \wedge \bar \w)$.
Points in $\Sigma$ are cone points where the total angle is a multiple of $2\pi$. A cone point 
where the total angle is $2 \pi (k+1)$ is called a singularity of order $k$. 
        
Let $\cH_g$ and $\cH_g^{\hyp}$ denote the spaces of genus $g$ translation surfaces of area $1$ and area $g$, respectively. The notation reflects the fact that a translation surface in $\cH_g^{\hyp}$ has an area comparable to the hyperbolic area of the underlying Riemann surface. 

Due to the work of Masur and Veech \cite{masur_interval_1982, veech_teichmuller_1986}, each space admits a finite Lebesgue-class measure, denoted by $\m_g$ on $\cH_g$ and $\m_g^{\hyp}$ on $\cH_g^{\hyp}$, referred to as the \textit{Masur-Smillie-Veech measures}. There is a natural measure $\m_{g*}^{\hyp}$ defined on the set of pointed translation surfaces $(X, o)$, where $X \in \cH_g^{\hyp}$, given by the disintegration formula:
\[
\dee\m_{g*}^{\hyp}(X, o) = \dee\m_X(o)~\dee\m_g^{\hyp}(X),
\]
where $\m_X$ is the Euclidean area measure on $X$. Since both $\m_X$ and $\m_g^{\hyp}$ are finite, the measure $\m_{g*}^{\hyp}$ is also finite. We define the corresponding probability measures:
\[
\P_{g*}^{\hyp} = \frac{\m_{g*}^{\hyp}}{g \cdot \m_g^{\hyp}(\cH_g^{\hyp})}
\qquad \text{and} \qquad
\P_g = \frac{\m_g}{\m_g(\cH_g)}.
\]
Let $\sT_*$ denote the space of all pointed connected translation surfaces up to isomorphism, where all singularities are of finite order and the set of singularities is discrete. The space $\sT_*$ can be endowed with a complete, separable metric using the Gromov-Hausdorff-Prokhorov topology (see \cite{abraham_note_2013} and \cite{MR4092421}). We view $\P_{g*}^{\hyp}$ as a measure on this space.

Our main theorem is:

\begin{theorem}\label{T:main}
The sequence $\set{\P_{g*}^{\hyp}}_{g=2}^\infty$ converges weakly to the distribution of the Poisson translation plane with intensity $4$.
\label{thm:main-theorem}
\end{theorem}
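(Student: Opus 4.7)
The plan is to reduce Benjamini--Schramm convergence to convergence of the local singularity configuration near a typical point. Because the target topology is pointed Gromov--Hausdorff--Prokhorov, it suffices to show, for every fixed $r>0$, that the law of the pointed measured metric ball $B_r(o, X_g)$ under $\P_{g*}^{\hyp}$ converges weakly to the law of the corresponding ball in the Poisson translation plane. Such an $r$-ball is determined by the singularities (with their orders) within distance $r$ of $o$, together with the combinatorics of any saddle connections that prevent $B_r(o,X_g)$ from embedding isometrically into a flat region of $\C$. The strategy therefore has two parts: (i) show that with high probability $B_r(o,X_g)$ embeds as a planar domain and contains only simple zeros, and (ii) show that the resulting configuration of simple zeros converges to a Poisson point process in $\C$ of intensity $4$.

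For (i), I would bound the probabilities that $o$ lies within distance $r$ of a singularity of order $\ge 2$, that the injectivity radius at $o$ is unusually small, or that $B_r(o,X_g)$ contains a short non-trivial saddle loop. The key tool is the Siegel--Veech integration formula for the MSV measure, which converts expected counts of saddle connection holonomies with prescribed length into integrals over the stratum. Combining Siegel--Veech with the large-genus asymptotics of Masur--Veech volumes (due to Aggarwal, Chen--M\"oller--Zagier, Delecroix--Goujard--Zograf--Zagier, and Mirzakhani--Zograf), one can argue that the principal stratum carries essentially all of the MSV mass relevant to the $r$-ball, and that such local pathologies have vanishing probability as $g \to \infty$.

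For (ii), the natural approach is the method of factorial moments. The $k$-th factorial moment of the number $N_r$ of simple zeros in $B_r(o,X_g)$ equals the expected number of ordered $k$-tuples of distinct simple zeros within distance $r$ of a uniformly random point, which can be evaluated using a higher Siegel--Veech formula over a configuration space of $k$ saddle connections. The large-genus volume asymptotics should yield the limit $(4\pi r^2)^k$, matching the $k$-th factorial moment of $\operatorname{Poisson}(4\pi r^2)$; this simultaneously identifies the intensity as $4$ and establishes Poisson convergence. Refining the same computation to track the joint holonomies of the $k$ points, rather than just their count, upgrades convergence of counts to convergence as a point process, and combined with (i) identifies the limit of $B_r(o,X_g)$ with the corresponding ball in the Poisson translation plane.

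The main obstacle is obtaining effective, uniform-in-$g$ control of bad local geometric events: short closed geodesics, small injectivity radius at $o$, near-coincidences of cone points, and contributions from non-principal strata. These are precisely the statistical local geometric properties the authors advertise in the abstract. Siegel--Veech constants and asymptotic volume formulas deliver the correct expectations, but controlling variances and tail events uniformly across a family of strata whose combinatorial complexity grows with $g$ is delicate; the choice of area equal to genus keeps the characteristic length scale bounded but also makes the needed estimates tight. Combining these tail bounds with the factorial-moment computation of (ii) is the technical heart of the proof.
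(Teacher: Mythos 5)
Your overall architecture (control local pathologies to get planarity and tightness, then identify the limiting singularity statistics as Poisson of intensity $4$, with Siegel--Veech theory and Aggarwal's large-genus volume asymptotics as the engines) matches the paper's three-step scheme. But the two technical devices you lean on are exactly the points where the paper says the standard toolkit fails. For your step (ii) you invoke ``a higher Siegel--Veech formula over a configuration space of $k$ saddle connections'' to compute $k$-th factorial moments. The paper explicitly rules this route out: the pair-counting results of Athreya--Fairchild--Masur rest on Siegel--Veech transforms of indicators being in $L^2$, and Athreya--Chaika showed such transforms are \emph{not} in $L^3$, so the moment method does not extend to $k$-tuples with $k\ge 3$, let alone with constants uniform in $g$. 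The paper's substitute is its central new tool: compositions of ``star surgeries'' giving measure-preserving maps between subsets of strata with holonomies tracked, which produce the distributional statement of Proposition \ref{prop:NumVisSing} (a binomial law $\binom{2g-3}{k}(4\pi R^2/g)^k(1-4\pi R^2/g)^{2g-k-3}+o_{k,R}(1)$) directly, with the Poisson limit then read off as in Corollary \ref{cor:get-poisson-distr-from-counting-vis-sings}. Likewise, in your step (i) you only budget for ``short non-trivial saddle loops'' plus small injectivity radius via plain Siegel--Veech counting; but the dangerous short closed geodesics generically pass through several singularities (chains of saddle connections) or lie in cylinders, and Siegel--Veech counting does not directly see such chains. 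The paper's planarity argument instead perturbs a surface with a short closed geodesic near the basepoint to a nearby surface, possibly in a lower-dimensional stratum, carrying a short closed saddle connection or a pair of homologous saddle connections, and only then applies Aggarwal's asymptotics; the cylinder case is handled via Masur--Rafi--Randecker.

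There is also a conceptual slip in your identification step: you treat the number of \emph{all} simple zeros in $B_r(o,X_g)$ as converging to $\Poisson(4\pi r^2)$. In the Poisson translation plane only the singularities \emph{visible} from the root form a Poisson process of intensity $4$; each visible singularity opens a $2\pi$-sector carrying a fresh, independent Poisson process, and these second-generation singularities can lie within distance $r$ of the root, so the total count in the ball is not $\Poisson(4\pi r^2)$ and its factorial moments are not $(4\pi r^2)^k$. Matching the law of the $r$-ball therefore requires establishing the full recursive independence structure (visible singularities from the root, then from each of those, and so on), which is exactly the extra content the paper flags when it notes that some singularities in the radius-$R$ ball are not visible from the root and their positions must also be described; ``tracking the joint holonomies of the $k$ points'' does not supply this. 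A minor point: the MSV measure already gives full mass to the principal stratum, so your concern about contributions from non-principal strata is vacuous.
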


In essence, Theorem \ref{thm:main-theorem} states that in a typical high-genus translation surface of area $g$, a typical point perceives no topology, and the ``visible'' singularities are distributed according to a Poisson point process with intensity $4$. We provide a more detailed description of Poisson translation planes below.

Note that an intensity of $4$ is expected. To see this, define \( N_r : X \to \mathbb{N} \), where $ N_r(o)$ is the number of singularities visible from $o$ that are at most distance $r$ from $o$. If Theorem \ref{thm:main-theorem} holds with intensity $\lambda$, then asymptotically and on average we have, 
\begin{equation} \label{Eq:N}
N_r \sim \lambda \cdot 2\pi r^2,
\end{equation}  
as the generic point is a regular point.  On the other hand,  
\[
\int_X N_r = \sum_{\sigma \in \Sigma} \, (\text{area of the visible ball of radius $r$ around $\sigma$}). 
\]  
Assuming that the ball of radius \( r \) around \( \sigma \) is generically planar, the right-hand side of this equation becomes \( (2g - 2) 4\pi r^2 \) (There are $(2g-2)$ singular points on a surface of  genus $g$).  
Using \eqref{Eq:N} and the fact that the area of \( X \) is \( g \), we have:  
\[
g \cdot \lambda \cdot  2\pi r^2 \sim (2g - 2) 4\pi r^2.  
\]  
That is, as $g \to \infty$, we have  \( \lambda \to 4 \).

\subsection{What is a Poisson translation plane?}
We begin by describing some properties of a Poisson translation plane. It is a random pointed translation surface $(P, o)$ equipped with a holomorphic $1$-form $\omega$, and it is conformally equivalent to the hyperbolic plane. With probability $1$, the point $o$ is regular, and all singularities are of order one.
  
If $\gamma$ is any path in $P$, its holonomy is the complex number $\hol(\gamma) = \int_\gamma \omega$.  
A point $y \in P$ is said to be \textbf{visible} from a point $x \in P$ if there exists a geodesic from $x$ to $y$ that does not pass through any singularities in its interior.  
The \textbf{visible set} of $x$ is the set of all points $y \in P$ that are visible from $x$. The \textbf{visible singularities} of $x$ are defined similarly. 
    
The structure of $P$ is easiest to understand from the perspective of the base point $o \in P$. Consider the collection $\{\hol(\gamma)\}_\gamma$, where $\gamma$ ranges over all geodesics from $o$ to a visible singularity. In a Poisson translation plane, this collection of complex numbers forms a Poisson point process $\Pi_o$ on $\C$. Each singularity $\s$ visible from $o$ is of order $1$, with a cone angle of $4\pi$. 

At each such singularity $\s$, there is a sector $\Sector_{\s} \subset P$ of angle $2\pi$ comprised of points visible from $\s$ but not from $o$. Holonomy maps this sector to a dense open subset $\hol(\Sector_{\s})\subset \C$ of the complex plane. The set $\{\hol(\gamma)\}_\gamma$ of holonomies of geodesics $\gamma$ connecting $\s$ to a singularity $\t \in \Sector_{\s}$ also forms a Poisson point process $\Pi_{\s}$ on $\C$. Continuing in this manner, for each $\t \in \Sector_{\s}$, there exists a sector $\Sector_{\t}$ of angle $2\pi$ at $\t$, consisting of points not visible from $\s$ but visible from $\t$. The singularities in $\Sector_{\t}$ visible from $\t$ also form a Poisson point process. 

In the Poisson translation plane, all of these Poisson point processes are jointly independent and share the same intensity. This common intensity defines the intensity of the Poisson translation plane.

    One can construct a Poisson translation plane of intensity $\l > 0$ in the following way. Let $\C_o$ be a copy of the complex plane and set $o$ to be $0 \in \C_o$. Sample a Poisson point process $\Pi_o$ of intensity $\l$ in $\C_o$. At each point $x \in \Pi_o$, we make a cut along the ray $\ray_x=[1,\infty]\cdot x$ and take the path-metric completion. We call this a slit-plane. Let $\C_x$ be a copy of the complex plane. Cut $\C_x$ along the ray $[0,\infty]\cdot x$ and take the path-metric completion. Now glue this to the slit-plane by identifying boundary-components in a holonomy-preserving manner. We have now constructed the depth $1$ approximation to the Poisson translation plane. 

     Now, on each $\C_x$, we take another Poisson point process $\Pi_x$ of intensity $\l$, all jointly independent, and perform the exact same procedure. Continuing this process forever ends with a Poisson translation plane of intensity $\l$. This is made more precise in 
     our upcoming paper \cite{BRV}. 

\subsection{Scaling}
In the literature, particularly in Teichmüller dynamics, it is common to focus on \textit{unit-area} translation surfaces. The number of singularities (counted with multiplicity) on such a surface is of order $g$, the genus. As $g \to \infty$, the number of singularities expected in a ball of radius $r$ around a uniformly random basepoint in a unit-area translation surface grows without bound. In particular, by local compactness, singularities will accumulate in any Benjamini-Schramm limit. While it is possible to study such limits, they are not the focus of this manuscript.

Instead, we scale the metric on unit-area translation surfaces by $\sqrt{g}$, giving surfaces with area $g$. This scaling allows us to translate fluently between unit-area and area $g$ translation surfaces. For instance, if $Y = \sqrt{g} \cdot X$ and $X$ is a unit-area translation surface, then there is a natural bijection between $Y$ and $X$. The radius $R$ ball in $Y$ corresponds to a ball of radius $\frac{R}{\sqrt{g}}$ in $X$. Scaling by $\sqrt{g}$ ensures that, at least intuitively, the expected number of singularities in a ball of radius $R$ around a typical basepoint in $Y$ is bounded by a function of $R$, and this number is positive. On the other hand, if the area scales super-linearly with genus, then the Benjamini-Schramm limit is almost surely $\mathbb{C}$. This means that with high probability,  there are no singularities near a randomly chosen basepoint. 

Thus, the critical case occurs when the area of the translation surface grows linearly with the genus. We choose area $g$ for convenience, but a qualitatively equivalent limit can be obtained for any linearly growing area in $g$. Indeed, suppose $(X_g,o_g)$ is sampled randomly from $\P^{\hyp}_{g*}$ as in Theorem \ref{T:main}. Now let $s X_g$ be the same surface as $X_g$ with its metric multiplied by $s$. Then the area of $sX_g$ is $s^2g$. A ball of radius $R$ in $X_g$ corresponds with a ball of radius $sR$ in $sX_g$. The average number of singularities visible from the root in the ball of radius $R$ centered at $o_g$ in $X_g$ limits on $4\pi R^2$ as $g\to\infty$, because $(X_g,o_g)$ BS-converges to a Poisson translation plane of intensity $4$. So the average number of singularities visible from the root in the ball of radius $sR$ centered at $o_g$ in $sX_g$ also limits on $4\pi R^2$. This implies that $(sX_g,o_g)$ BS-limits to a Poisson translation plane with intensity $\frac{4\pi R^2}{\pi(sR)^2}=4 /s^2$.

For example, one could set the area of the translation surface equal to the hyperbolic area $2\pi(2g-2)$. In this case, the intensity of the Poisson translation plane in Theorem \ref{thm:main-theorem} would be $\lambda = \frac{1}{{\pi}}$.

    \subsection{Estimates for unit-area translation surfaces}
    \label{subsec:estimates-for-unit-area-translation-surfaces}
    The proof of Theorem \ref{thm:main-theorem} is done in three main steps. First, we prove that $\P_{g*}^{\hyp}$-random translation surfaces do not have too many singularities near the basepoint; we then prove planarity of the limit. These two results together imply tightness of $\set{\P_{g*}^{\hyp}}_{g \geq 2}$. By Prokhorov's Theorem, this implies existence of subsequential limits. We then must prove, for example, that the number of singularities visible from the root in a ball of radius $R$ converges in distribution to a Poisson random variable. Moreover, the holonomies of the shortest paths from the root to these singularities are uniformly distributed. This is only an example because there may be singularities in the radius $R$ ball that are not visible to the root and we also need to describe the distribution of their positions relative to the root.
    
    The proof therefore requires three kinds of estimates on unit-area translation surfaces. We state below the versions of these estimates when the basepoint is conditioned to be a fixed (label of a) singularity. Rigorous statements involving random basepoints sampled according to Euclidean area appear in \cite{BRV}.

    The following estimate deals with the case of too many singularities near the basepoint. 
    
    \begin{restatable}{proposition}{NumSing}
      There exists a universal constant $C > 0$ such that the following holds. Let $(X, \w) \sim \P_g$ be a random translation surface of genus $g$ and area 1 distributed according to Masur-Smillie-Veech measure. Let $\s, \t$ be fixed (labels of) singularities in $X$. Let $r >0$ and $d_X$ denote the flat metric on $X$. Then
      \[
        \P_g[d_X(\s, \t)\leq r] \leq \frac{C}{g} \exp(4r \sqrt{\pi g}).
      \]
      \label{prop:NumSing}
    \end{restatable}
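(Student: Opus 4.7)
The plan is to dominate the event $\{d_X(\s,\t) \leq r\}$ by the event that some chain of saddle connections from $\s$ to $\t$ has total length at most $r$, and to bound the expected number of such chains via an iterated Siegel-Veech formula. If $d_X(\s,\t) \leq r$, then the shortest geodesic from $\s$ to $\t$ decomposes as a concatenation of saddle connections $s_1, s_2, \ldots, s_k$ (for some $k \geq 1$) through intermediate labeled singularities $\s = p_0, p_1, \ldots, p_{k-1}, p_k = \t$ with $\sum_i |s_i| \leq r$. Letting $N_k$ denote the number of such chains in $X$, Markov's inequality gives
\[
\P_g[d_X(\s,\t) \leq r] \leq \sum_{k \geq 1} \E_g[N_k].
\]

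For each fixed sequence of intermediate labeled singularities $(p_1, \ldots, p_{k-1})$, I would apply a joint Siegel-Veech formula, to be developed in the companion paper \cite{BRV}:
\[
\E_g\Big[\#\{(s_1,\ldots,s_k) : s_i \text{ is a saddle connection from } p_{i-1} \text{ to } p_i,\ \textstyle\sum_i|s_i| \leq r\}\Big] = c^{(g)}_{p_0 \cdots p_k} \cdot \mathrm{vol}\left\{v \in \C^k : \textstyle\sum_i|v_i| \leq r\right\},
\]
where the polytope volume evaluates to $(2\pi)^k r^{2k}/(2k)!$ in polar coordinates. The critical technical input is the uniform bound $c^{(g)}_{p_0 \cdots p_k} \leq 4^k$ on joint Siegel-Veech constants, which should follow from a suitable extension of the classical Eskin-Masur framework to chains.

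Summing over the at most $(2g-2)^{k-1}$ choices of intermediate singularities and then over $k \geq 1$:
\[
\sum_{k \geq 1} \E_g[N_k] \leq \sum_{k \geq 1}(2g)^{k-1} \cdot 4^k \cdot \frac{(2\pi r^2)^k}{(2k)!} = \frac{1}{2g}\sum_{k\geq 1}\frac{(16\pi gr^2)^k}{(2k)!} \leq \frac{1}{2g}\cosh(4r\sqrt{\pi g}) \leq \frac{1}{2g}\exp(4r\sqrt{\pi g}),
\]
which yields the claimed bound with $C = 1/2$.

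The central obstacle is establishing the joint Siegel-Veech bound $c^{(g)}_{p_0 \cdots p_k} \leq 4^k$. The classical Siegel-Veech formula handles only $k = 1$, and two natural approaches present themselves for the extension: (a) an inductive peeling argument, conditioning on the first saddle connection $s_1$, cutting the surface along it, and applying Siegel-Veech to the residual stratum of lower complexity; or (b) a direct joint formula derived from $\SL{2}{\R}$-equivariance of the stratum, generalizing the Eskin-Masur integration to higher-dimensional integrals over tuples of holonomies. The constant $4$ reflects the $4\pi$ cone angle at simple singularities in the principal stratum and matches the Poisson intensity $\lambda = 4$ appearing in Theorem \ref{thm:main-theorem}.
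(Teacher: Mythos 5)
Your combinatorial skeleton is sound and matches the intended structure: dominating $\{d_X(\s,\t)\leq r\}$ by the existence of a chain of saddle connections of total length $\leq r$, applying Markov to the number of chains, computing the polytope volume $(2\pi)^k r^{2k}/(2k)!$ (a standard Dirichlet integral), and then summing a $\cosh$-type series. The arithmetic checks out and does produce the claimed $\frac{C}{g}\exp(4r\sqrt{\pi g})$ with $C = 1/2$, \emph{provided} the joint Siegel--Veech bound $c^{(g)}_{p_0\cdots p_k}\leq 4^k$ holds uniformly.

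That bound is, however, exactly the content of the proposition, and the paper explicitly tells you that your route (b) is a dead end. The passage on techniques cites Athreya--Fairchild--Masur on counting pairs of saddle connections and then notes that their argument hinges on the Siegel--Veech transform of a ball indicator being in $L^2$ (Athreya--Cheung--Masur), while Athreya--Chaika showed it is \emph{not} in $L^3$. This ``crushes hopes'' of directly controlling $n$-tuples, and hence chains, for $n\geq 3$ by the $\SL(2,\R)$-equivariance / integral-geometry route you sketch in (b). So you cannot expect a one-shot ``iterated Siegel--Veech formula'' to come out of that framework.

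Your alternative (a), inductive peeling by cutting along the first saddle connection, is closer in spirit to what the authors actually do, but as stated it is not a proof: cutting along a single saddle connection does not cleanly land you back in a stratum of translation surfaces on which you can reapply Siegel--Veech verbatim (one may degenerate the combinatorics, change the singularity pattern, and one must track the remaining chain's holonomies and lengths through the cut). The paper's stated mechanism is instead a composition of bespoke ``star surgeries,'' which are measure-preserving maps between subsets of strata that are carefully designed to transport holonomy and length information through each step; the authors emphasize that each of Propositions~\ref{prop:NumSing}--\ref{prop:NumVisSing} uses a distinct composite surgery and that controlling these compositions (and the book-keeping of holonomies) is the bulk of the work, with Aggarwal's volume asymptotics supplying the quantitative input. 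In short: the reduction to counting chains and the final summation are fine, but the load-bearing estimate $c^{(g)}_{p_0\cdots p_k}\leq 4^k$ is unsupported as written, and the $L^2$-transform technology you lean on is precisely what the authors point out fails beyond pairs.
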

    
     The next estimate deals with the probability that the injectivity radius at a uniformly random point is small; this is necessary to prove planarity of the Benjamini-Schramm limit as well as tightness. It is easier to first prove such for singularities. For a (label of a) singularity $\s$ in a translation surface $(X, \w)$, we let $\injrad_X(\s)$ denote the supremum of radii $r$ such that the metric ball of radius $r$ about $\s$ is simply connected.

    \begin{restatable}{proposition}{NumSimpleGeod}
      Let $(X, \w) \sim \P_g$ be a random unit-area translation surface of genus $g$ distributed according to Masur-Smillie-Veech measure. Let $\s$ be a fixed (label of a) singularity in $X$ and $R > 0$. Then
      \[
        \P_g\lrb{\injrad_X(\s) < \frac R {\sqrt g}} = o_R(1).
      \]
      as $g \to \infty$.
      \label{prop:NumSimpleGeod}
    \end{restatable}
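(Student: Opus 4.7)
The plan is to reduce $\{\injrad_X(\s) < R/\sqrt g\}$ to the existence of a short saddle connection with endpoint $\s$, and then control this probability via a Siegel-Veech type count together with high-genus asymptotics of the relevant Siegel-Veech constant.

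First, I would establish the geometric reduction. If $B(\s, r)$ fails to be simply connected, then the exponential map from the tangent cone at $\s$ fails to be injective on the disk of radius $r$, so two distinct geodesic rays from $\s$ of length strictly less than $r$ meet at a common point $p$. If $p$ is regular, this produces a saddle loop at $\s$ of length less than $2r$; if $p$ is a singularity $\t$, it produces two saddle connections from $\s$ to $\t$ of total length less than $2r$. In both cases, there exists a saddle connection with $\s$ as an endpoint of length at most $2r$, so the event in question is contained in the event that such a saddle connection exists.

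Next, I would control the expected number of such short saddle connections via the Siegel-Veech formula for $\cH_g$. The total expected number of saddle connections of length at most $L$ on a $\P_g$-random surface is $\pi L^2 c_{\mathrm{sc}}(\cH_g)$, where $c_{\mathrm{sc}}(\cH_g)$ is the stratum's Siegel-Veech constant. By the symmetry of the MSV measure under permutation of the labeled singularities, the expected number of saddle connections incident to the fixed singularity $\s$ is at most $2\pi L^2 c_{\mathrm{sc}}(\cH_g)/(2g-2)$. Setting $L = 2R/\sqrt g$ and applying Markov's inequality to the count from the previous step yields
\[
\P_g\lrb{\injrad_X(\s) < \frac{R}{\sqrt g}} \leq \frac{8\pi R^2\, c_{\mathrm{sc}}(\cH_g)}{g(2g-2)}.
\]
To finish, one needs $c_{\mathrm{sc}}(\cH_g) = o(g^2)$ as $g\to\infty$, which yields the $o_R(1)$ conclusion. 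Such a bound for the principal stratum follows from the Eskin-Kontsevich-Zorich formula expressing $c_{\mathrm{sc}}$ in terms of sums of inverse Lyapunov exponents of the Kontsevich-Zorich cocycle, combined with the high-genus asymptotic estimates of Aggarwal and Chen-M\"oller-Sauvaget-Zagier.

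The main obstacle I anticipate is the quantitative control of $c_{\mathrm{sc}}(\cH_g)$ in high genus. Although polynomial growth in $g$ is well-established, extracting a bound tight enough to give the needed probability estimate requires careful use of Hodge-theoretic asymptotics. An alternative is to use the Eskin-Masur-Zorich configuration decomposition, treating short saddle loops at $\s$ and short connections between distinct singularities separately; each configuration's Siegel-Veech constant can be computed more explicitly and its high-genus asymptotic is more directly accessible. A third option is to bootstrap from Proposition \ref{prop:NumSing}, combining its tail bound with an area-moment argument that exploits the $\sqrt g$ scaling to upgrade an $O_R(1)$ estimate on the expected number of nearby singularities to the required $o_R(1)$ probability bound on the existence of a short saddle connection at $\s$.
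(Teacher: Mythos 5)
Your first step (the reduction of non--simple-connectivity of $B(\sigma,r)$ to a short saddle connection incident to $\sigma$) is essentially correct for a singular basepoint --- tightening the based essential loop rel $\sigma$ yields a concatenation of saddle connections through $\sigma$, whose first edge is incident to $\sigma$ and has length $<2r$ --- but it is fatally lossy, and the quantitative step built on it cannot work. The event ``there is a saddle connection incident to $\sigma$ of length at most $2R/\sqrt g$'' is \emph{not} a rare event: by Proposition \ref{prop:NumVisSing} and Corollary \ref{cor:get-poisson-distr-from-counting-vis-sings}, the number of singularities visible from $\sigma$ within distance $2R/\sqrt g$ converges to a Poisson variable of mean $32\pi R^2$, so the probability of your containing event tends to $1-e^{-32\pi R^2}$, not to $0$. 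Correspondingly, the estimate you say you need, $c_{\mathrm{sc}}(\cH_g)=o(g^2)$, is false: by Eskin--Masur--Zorich and Aggarwal's asymptotics, the Siegel--Veech constant for connections joining any fixed pair of labeled zeros is bounded below by a positive constant uniformly in $g$, and there are $\asymp g^2$ such pairs, so $c_{\mathrm{sc}}(\cH_g)\asymp g^2$. Plugging this into your Markov bound with $L=2R/\sqrt g$ gives a quantity of order $R^2$, i.e.\ $O_R(1)$, which says nothing about decay in $g$. The same defect sinks your fallbacks: the EMZ configuration count for a single short connection at $\sigma$, or a union bound from Proposition \ref{prop:NumSing} over the $2g-3$ other zeros (which gives roughly $2C e^{8R\sqrt\pi}$), again only bound events whose probability is genuinely of order a constant.

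The missing idea is that $\injrad_X(\sigma)<R/\sqrt g$ forces much more than one short incident saddle connection: it forces a homotopically essential closed loop through $\sigma$ of length $<2R/\sqrt g$, i.e.\ either a short cylinder nearby or a closed \emph{chain} of possibly many saddle connections through $\sigma$, and it is the probability of such chains that must be shown to vanish. This is exactly the obstruction the announcement emphasizes: Siegel--Veech counting does not directly control closed geodesics through several singularities, pair-counting \`a la Athreya--Fairchild--Masur does not extend to $n$-tuples because the Siegel--Veech transform is not in $L^3$, and so a first-moment bound over single (or even pairs of) saddle connections cannot see the loop structure. The proof deferred to \cite{BRV} instead composes ``star surgeries'' to map the locus of surfaces with a short essential chain or cylinder near $\sigma$ into strata (possibly of lower dimension) carrying a short saddle connection or a pair of homologous saddle connections, and then invokes Aggarwal's volume asymptotics to show that locus has negligible measure; some mechanism of this kind, producing a gain in $g$ beyond the first moment, is what your proposal lacks.
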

    The notation $o_R(1)$ means that 
    \[
    \P\lrb{\injrad_X(o) < \frac R {\sqrt g}} = f(R, g)
    \]
    for some function $f$ such that
    \[
    \lim_{g \to \infty} f(R, g) = 0
    \]
    for all $R > 0$.

    We lastly compute the limiting distribution of visible singularities. 
    In \cite{BRV}, we also show joint independence between multiple distinct singularities. 

\begin{restatable}{proposition}{NumVisSing}
Let $(X, \omega) \sim \P_g$ be a Masur-Smillie-Veech random unit-area translation surface of genus $g$, and let $\s$ be a fixed singularity in $X$. Let $\Sigma_{\vis}(X, \s, r)$ denote the set of singularities $\t$ such that $\tau$ is visible from $\s$ in the ball of radius $r$ centered at $\s$.  Then for every $R > 0$ and $k \in \mathbb{N} \cup \{0\}$,
\[
  \P\left(\#\Sigma_{\vis}\left(X, \s, \frac{R}{\sqrt{g}}\right) = k\right) = \binom{2g-3}{k} \left( \frac{4\pi R^2}{g} \right)^k \left( 1 - \frac{4\pi R^2}{g} \right)^{2g - k - 3} + o_{k, R}(1)
\]
as $g \to \infty$.
\label{prop:NumVisSing}
\end{restatable}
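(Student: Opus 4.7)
The plan is the method of factorial moments. Set $N := \#\Sigma_{\vis}(X, \s, R/\sqrt{g})$ and $p_g := 4\pi R^2/g$, and write $(m)_k := m(m-1)\cdots(m-k+1)$. Since the right side of the claimed formula is the PMF of $\mathrm{Binomial}(2g-3, p_g)$, which converges to the $\mathrm{Poisson}(8\pi R^2)$ PMF pointwise in $k$ as $g \to \infty$, the claim is equivalent to $N$ converging in distribution to $\mathrm{Poisson}(8\pi R^2)$. By the classical method of factorial moments for Poisson convergence, it suffices to prove $\E_{\P_g}[(N)_k] = (2g-3)_k\, p_g^k \,(1 + o_{k,R}(1))$ for each fixed $k \geq 0$.

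By permutation symmetry of the MSV measure on the labeled stratum $\cH(1^{2g-2})$, the joint visibility probability $q_k := \P[\{\t_1, \ldots, \t_k\} \subseteq \Sigma_{\vis}(X, \s, R/\sqrt{g})]$ depends only on $k$, not on the choice of distinct labels $\t_1, \ldots, \t_k \ne \s$. Expanding $(N)_k$ as a sum over ordered $k$-tuples of distinct visible singularities yields $\E[(N)_k] = (2g-3)_k \cdot q_k$, reducing the task to $q_k = p_g^k(1 + o_{k,R}(1))$. For $k = 1$: visibility of $\t$ from $\s$ within distance $r$ is equivalent to the existence of a saddle connection from $\s$ to $\t$ of length $\leq r$, so $q_1 = \P[N_{\s,\t}(X, R/\sqrt{g}) \geq 1]$, where $N_{\s,\t}(X, r)$ counts saddle connections from $\s$ to $\t$ of length at most $r$. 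The Siegel-Veech formula gives $\E[N_{\s,\t}(X, r)] = c_{\s,\t}(g)\, \pi r^2$, and the high-genus Siegel-Veech asymptotics for $\cH(1^{2g-2})$ established in \cite{BRV} yield $c_{\s,\t}(g) = 4 + o_g(1)$, whence $\E[N_{\s,\t}(X, R/\sqrt{g})] = p_g\,(1+o_g(1))$. Converting this first-moment estimate into $q_1 = p_g(1+o_R(1))$ uses the bound $\E[N_{\s,\t}(N_{\s,\t}-1)] = O(R^4/g^2) = o(p_g)$, obtained via a Siegel-Veech computation for configurations of two distinct saddle connections sharing both endpoints.

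For $k \geq 2$, a parallel Siegel-Veech computation expresses the expected number of $k$-tuples of distinct saddle connections from $\s$ ending at $(\t_1, \ldots, \t_k)$ in terms of a configuration Siegel-Veech constant $c_{\s;\t_1,\ldots,\t_k}(g)$. The crux is the factorization $c_{\s;\t_1,\ldots,\t_k}(g) = \prod_{j=1}^k c_{\s,\t_j}(g) \cdot (1+o_g(1))$ in the high-genus limit, reflecting an asymptotic independence: removing $k$ short geometrically independent saddle connections from an MSV-random high-genus surface leaves a random surface essentially the same in distribution. Combined with the planarity control from Proposition \ref{prop:NumSimpleGeod} (which ensures $B(\s, R/\sqrt{g})$ is simply connected with probability $1 - o_R(1)$, so distinct visible singularities correspond bijectively to distinct saddle connection endpoints) and with Proposition \ref{prop:NumSing} (which rules out pathological clustering of singularities), this yields $q_k = p_g^k(1 + o_{k,R}(1))$. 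The main obstacle is establishing the high-genus factorization of configuration Siegel-Veech constants, which rests on detailed volume asymptotics for $\cH(1^{2g-2})$ with marked saddle-connection data; this is the technical heart of the argument carried out in \cite{BRV}.
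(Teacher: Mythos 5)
Note first that this announcement does not actually contain a proof of Proposition \ref{prop:NumVisSing}: only Corollary \ref{cor:get-poisson-distr-from-counting-vis-sings} is proved here, and the proposition itself is deferred to \cite{BRV}, so the comparison can only be with the strategy the authors describe in the Techniques section. Your overall scaffolding is fine and is essentially the standard one: identifying ``$\t$ visible from $\s$ within $r$'' with ``there is a saddle connection from $\s$ to $\t$ of length at most $r$'', using exchangeability of the labels in the principal stratum to write $\E[(N)_k]=(2g-3)_k q_k$, and noting that the stated binomial formula with additive $o_{k,R}(1)$ error is equivalent to Poisson$(8\pi R^2)$ convergence, so the method of factorial moments would suffice. (Your invocation of Proposition \ref{prop:NumSimpleGeod} for a ``bijection'' here is superfluous: the visibility/saddle-connection dictionary is immediate from the definitions.)

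The genuine gap is that the only hard step --- the factorization $q_k=p_g^k(1+o_{k,R}(1))$, which you phrase as an asymptotic factorization of configuration Siegel--Veech constants for $k$-tuples of saddle connections emanating from $\s$ --- is asserted rather than proved, and is justified by citing \cite{BRV}, i.e.\ the very paper whose result you are being asked to prove; even your first- and second-moment inputs ($c_{\s,\t}(g)=4+o(1)$ and the $O(R^4/g^2)$ pair bound) are attributed to \cite{BRV}. Worse, the route you sketch for that step is precisely the one this announcement flags as problematic: Siegel--Veech theory handles single saddle connections, the pair results of \cite{athreya_counting_2023} rest on the $L^2$ property of Siegel--Veech transforms \cite{athreya_siegel-veech_2019}, and by \cite{athreya_distribution_2012} these transforms are not in $L^3$, so the configuration-constant/higher-moment machinery does not extend to $n$-tuples with $n>2$ without new ideas. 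The authors' actual mechanism is different in kind: they compose ``star surgeries'' to build measure-preserving maps between subsets of strata, tracking holonomies of the relevant paths, and combine this with Aggarwal's volume asymptotics to control arbitrarily long chains of saddle connections. So while your reduction to factorial moments is a legitimate reformulation, the proposal does not supply (or even outline a workable substitute for) the argument that carries the proof, and as written it is circular at exactly that point.
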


\begin{corollary}
With the notation of Proposition \ref{prop:NumVisSing}, we have
\[
  \lim_{g \to \infty} \P\left(\#\Sigma_{\vis}\left(X, \s, \frac{R}{\sqrt{g}}\right) = k\right) = \frac{\left(8 \pi R^2\right)^k}{k!} e^{-8\pi R^2}.
\]
In other words, the random variables $N_g = \#\Sigma_{\vis}\left(X, \s, \frac{R}{\sqrt{g}}\right)$ converge in distribution to a Poisson random variable with mean $8 \pi R^2$ as $g \to \infty$.
\label{cor:get-poisson-distr-from-counting-vis-sings}
\end{corollary}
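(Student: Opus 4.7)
The plan is to deduce this corollary directly from Proposition \ref{prop:NumVisSing} by the classical Poisson limit theorem, since the main term there is precisely the probability mass function of a $\mathrm{Bin}(n_g, p_g)$ random variable with $n_g = 2g-3$ and $p_g = 4\pi R^2/g$. First I would observe that the additive error $o_{k,R}(1)$ vanishes as $g \to \infty$, so it suffices to evaluate
\[
  \lim_{g \to \infty} \binom{2g-3}{k}\left(\frac{4\pi R^2}{g}\right)^k \left(1 - \frac{4\pi R^2}{g}\right)^{2g-k-3}.
\]
A direct check shows $n_g p_g = (2g-3)\cdot 4\pi R^2/g \longrightarrow 8\pi R^2$, which is the parameter that should appear in the limiting Poisson distribution.

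Next I would split the expression into the falling-factorial factor and the tail factor, and handle each separately. For the first, write
\[
  \binom{2g-3}{k}\left(\frac{4\pi R^2}{g}\right)^k = \frac{(2g-3)(2g-4)\cdots(2g-k-2)}{k!}\cdot\frac{(4\pi R^2)^k}{g^k};
\]
each of the $k$ factors in the numerator is asymptotic to $2g$, so the ratio tends to $2^k$, giving the limit $(8\pi R^2)^k/k!$. For the tail factor, $\lrp{1 - 4\pi R^2/g}^{2g-k-3} \longrightarrow e^{-8\pi R^2}$, using the standard limit $(1-a/g)^{2g}\to e^{-2a}$ with $a = 4\pi R^2$, together with the fact that the additional exponent $-k-3$ contributes a factor tending to $1$. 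Multiplying these two limits yields the claimed Poisson mass function.

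Finally, since the limiting probabilities $\tfrac{(8\pi R^2)^k}{k!}e^{-8\pi R^2}$ sum to $1$ over $k \in \N \cup \{0\}$, pointwise convergence of the mass functions automatically upgrades to convergence in distribution of the $\N$-valued random variables $N_g$ to a Poisson random variable with mean $8\pi R^2$. There is no real obstacle in the corollary itself: its entire content is bundled into Proposition \ref{prop:NumVisSing}, and what remains is the textbook binomial-to-Poisson limit with explicit rate parameter $n_g p_g \to 8\pi R^2$.
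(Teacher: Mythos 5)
Your proposal is correct and follows essentially the same route as the paper: drop the $o_{k,R}(1)$ term, use $\binom{2g-3}{k} \sim (2g)^k/k!$, pair it with $(4\pi R^2/g)^k$ to get $(8\pi R^2)^k/k!$, and send the remaining factor $(1-4\pi R^2/g)^{2g-k-3}$ to $e^{-8\pi R^2}$. The only difference is that you spell out a few more of the elementary limit steps (and add the closing remark that pointwise convergence of mass functions summing to $1$ yields convergence in distribution), which the paper leaves implicit.
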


\begin{proof}[Proof of Corollary \ref{cor:get-poisson-distr-from-counting-vis-sings}]
Sending $g \to \infty$, we apply the approximation
\[
  \binom{2g - 3}{k} \sim \frac{(2g)^k}{k!}
\]
to the estimate in Proposition \ref{prop:NumVisSing}, obtaining
\[
  \P\left(\#\Sigma_{\vis}\left(X, \s, \frac{R}{\sqrt{g}}\right) = k\right) \sim \frac{2^k g^k}{k!} \cdot \frac{\left(4 \pi R^2\right)^k}{g^k} \left(1 - \frac{4\pi R^2}{g}\right)^{2g - k - 3}.
\]
Taking the limit as $g \to \infty$ completes the proof.
\end{proof}

    \subsection{Related literature}
    Benjamini-Schramm convergence was introduced by Itai Benjamini and Oded Schramm to study random
    connected planar graphs \cite{benjamini_recurrence_2001}. Such a notion admits natural generalizations to metric spaces and
    manifolds with additional structure \cite{khezeli2023unimodularrandommeasuredmetric}. One can understand the notion of Benjamini-Schramm limit to
    be an answer to the question ``what it is like to live at a typical point in the space?'' Some examples of Benjamini-Schramm convergence include:
    \begin{enumerate}
    \item If $G_n$ is a uniformly random $d$-regular graph with $n$ vertices and $r_n \in G_n$ is
      a root vertex chosen at random, then the pairs $(G_n, r_n)$ Benjamini-Schramm
      converge to a $d$-regular tree. Indeed, one can show that for any $t$ larger than 2, the
      number of cycles of length $t$ in $G_n$ converges to a Poisson random variable with parameter
      $\frac{(d-1)^t}{2t}$. However, there are $n$ vertices, so most vertices do not lie on a cycle of length $t$
      as $n$ tends to infinity with $t$ held fixed. See \cite[Theorem 2.5]{lamb_models_1999} for more details.
      \item The Benjamini-Schramm limit for Weil-Petersson random high genus hyperbolic surfaces was
      proven to be the hyperbolic plane by Monk \cite{monk_benjaminischramm_2022} using techniques of 
      Mirzakhani \cite{mirzakhani_growth_2013}. In particular, for any $R>0$, most points in a generic hyperbolic 
      surface $X$ of high genus are far from any simple closed curve of length $\le R$.
      \item In \cite{MR4199439}, it shown that if, for every genus $g\ge 2$, one chooses uniformly at random a triangulation of a surface of genus $g$ with $n$ triangles (where $n$ is proportional to $g$) then these graphs Benjamini-Schramm converge to a Planar Stochastic Hyperbolic Triangulation, which is a random triangulation of the hyperbolic plane introduced in \cite{MR3520011}. This confirms a conjecture of Benjamini and Curien. This result was inspirational to us in the early stages of this project.

    \item In \cite{MR}, similar results are obtained regarding the Benjamini-Schramm limits of periodic orbits in homogeneous spaces and the moduli space of translation surfaces.
    \end{enumerate}

    Let us digress on high genus random hyperbolic surfaces and compare it with the current work. The moduli space of hyperbolic surfaces $\cM_{g}$ of genus $g$ admits a finite measure $\m_{\WP}^{g}$ called Weil-Petersson measure. In the mid to late 2000s, Maryam Mirzakhani developed a way to integrate ``geometric'' functions $f: \cM_{g} \ra \R_+$ according to $\m_{\WP}$ --- her famous integration formula \cite{mirzakhani_simple_2006}. The integration formula enabled Mirzakhani to develop recursive relations among the volumes of various moduli spaces of bordered hyperbolic surfaces. While some of these volumes were known via other methods, the recursive relations enabled her to obtain asymptotics for these volumes as genus tends to infinity in \cite{mirzakhani_growth_2013}. Utilizing these asymptotics and her integration formula, she estimated statistical features of high genus random hyperbolic surfaces. For instance, using her results on the number of simple closed curves of bounded length, Mirzakhani proved\footnote{The bound presented here is a corrected version of Mirzakhani's original asymptotic which was provided by Laura Monk \cite{monk_benjaminischramm_2022}.} that for $\m_{WP}^g$-random hyperbolic surfaces $X$,
    \[
    \P_{\m_{\WP}^g}\lrb{\frac{1}{\vol(X)}\vol_X\lrp{\set{x \in X : \injrad(x) < \frac 1 6 \log(g)}}} = O(g^{-1/3}),
    \]
    where $\vol_X$ is the hyperbolic area on $X$. The above implies that as genus tends to infinity, a uniformly random point (with respect to hyperbolic area) has injectivity radius tending to infinity. It follows that if the Benjamini-Schramm limit of hyperbolic surfaces exists, then it is almost surely simply connected.

    One of the crowning jewels of the field is the result of Mirzakhani and Petri \cite{mirzakhani_lengths_2019}, which proved that as $g \ra \infty$, the collection of lengths of the simple closed geodesics on $\m_{\WP}^{g}$-random hyperbolic surface $X$ forms a Poisson point process on $\R_+$. We note that the intensity measure of the Poisson point process is \textit{not} Lebesgue!

    In \cite{Masur-Randecker-Rafi} Masur, Randecker and Rafi 
     obtain analogous results to Mirzakhani and Petri for high genus translation surfaces. They prove that the number of saddle connections of lengths in fixed intervals are independent Poisson random variables as genus tends to infinity. Though similar to the present work, their theorems differ in that they deal with the global situation, whereas we focus entirely on local phenomena. For instance, while Theorem \ref{thm:main-theorem} can be used to prove that the number of saddle connections with lengths in a fixed bounded interval is Poisson distributed locally, our techniques do not readily extend to the global case, as we rely heavily on our planarity result. 
    
    \subsection{Techniques}

    In the current work, our approach is somewhat analogous to Mirzakhani's and Monk's detailed above. There is a large body of work related to translation surfaces that was developed between the 1980s and early 2000s prior to Mirzakhani's breakthroughs. The analogous tool to Mirzakhani's integration formulas in our setting of translation surfaces is Siegel-Veech theory, which allows us to integrate, or compute expected values of, various ``geometric functions'' on moduli spaces of translation surfaces. On the other hand, the conjectured volume asymptotics for strata of translation surfaces were only proven somewhat recently by Aggarwal \cite{aggarwal_large_2020}; he also similarly computed the asymptotic values of Siegel-Veech constants in \cite{aggarwal_large_2019}.    

There are, however, several new challenges in proving the planarity of the Benjamini-Schramm limit for high genus random translation surfaces, challenges that do not arise in the case of hyperbolic surfaces. While Siegel-Veech theory provides effective estimates for the number of closed saddle connections of a given length in a generic translation surface, it does not directly extend to general closed geodesics, particularly those passing through multiple singularities. Developing new ideas to handle these geodesics was a key aspect of our work. A significant portion of the effort in this project was devoted to ruling out short geodesics as part of establishing the planarity of the Benjamini-Schramm limit.

Our approach to proving planarity involves showing that if $X$ contains a short simple closed geodesic near a basepoint $o$, then there exists a nearby surface $X'$, possibly lying in a lower-dimensional stratum, that has a short closed saddle connection, or a pair of homologous saddle connections, near the basepoint $o'$ of $X'$. Using the results of Aggarwal and the Siegel-Veech framework, we show that the volume of translation surfaces containing short simple closed geodesics near the basepoint is asymptotically negligible compared to the volume of the entire stratum as genus tends to infinity.

There is also the case of simple closed geodesics that do not pass through any singularities. These geodesics lie in cylinders foliated by parallel simple closed geodesics and are more straightforward to handle. Here again, Siegel-Veech theory proves to be valuable. Fortunately, this case has essentially been addressed in the work of Masur, Rafi, and Randecker \cite{masur_expected_2022}, which also relies on Aggarwal's volume asymptotics.

It is important to note the results regarding Siegel-Veech constants of \textit{pairs} of saddle connections, such as the work of Athreya, Fairchild, and Masur \cite{athreya_counting_2023}. These are unfortunately inadequate for the problem we seek to solve because we need to deal with chains of $n$ saddle connections, for arbitrary $n$. The central driver of their result on pairs of saddle connections involves the fact that Siegel-Veech transforms of characteristic functions of balls about 0 are in $L^2$. This is a result of Athreya, Cheung, and Masur \cite{athreya_siegel-veech_2019}; on the other hand, it was shown by Athreya and Chaika \cite{athreya_distribution_2012} that the Siegel-Veech transform of a characteristic function of a ball about 0 is \textit{not} in $L^3$, thus crushing hopes that their techniques could be used to study $n$-tuples of saddle connections, for $n > 2$. 

Our central technique is \textit{composing together many surgeries}. These surgeries, considered individually, are not new; they closely resemble those found in the works of Eskin, Masur, and Zorich \cite{eskin_moduli_2003}, Schiffer variations, and rel flow \cite{chaika_ergodic_2024}. However, our approach differs in that we must compose many surgeries together and carefully track additional information about their effects on individual surfaces, such as the holonomies of paths.

We refer to our surgeries as ``star surgeries.'' The geometric perspective afforded by star surgeries is both robust and flexible, enabling us to control complications that would arise if we attempted to directly apply the surgery techniques available in the literature. By composing these star surgeries, we construct measure-preserving maps between specific subsets of strata, all while maintaining careful control of the relevant geometric information. Notably, each proposition in Section \ref{subsec:estimates-for-unit-area-translation-surfaces} relies on a distinct composite surgery tailored to its specific requirements.

\subsection{Acknowledgements}
Lewis and Hunter thank Jon Chaika and Jayadev Athreya for helpful conversations in the early stages of this project.


\end{document}